\def \za{\alpha}
\def \ze{\varepsilon}
\def \zh{\theta}
\def \zl{\lambda}
\def \zm{\mu}
\def \zx{\xi}
\def \zp{\pi}
\def \zr{\rho}
\def \zt{\tau}
\def \zf{\varphi}
\def \zq{\psi}
\def \zw{\omega}
\def \zF{\Phi}
\def \zlma{\ell}
\def \zsu{\sum}
\def \zil{\int}
\def \zin{\cap}
\def \zun{\cup}
\def \zung{\bigcup}
\def \zex{\wedge}
\def \zmm{\pm}
\def \zpu{\cdot}
\def \zpor{\times}
\def \zci{\circ}
\def \zmei{\leq}
\def \zmai{\geq}
\def \zco{\subset}
\def \zpe{\in}
\def \zeq{\equiv}
\def \znoi{\neq}
\def \zpar{\partial}
\def \zinf{\infty}
\def \zfl{\rightarrow}
\def \zbv{\mid}
\def \z/{\over}
\newtheorem{theorem}{Theorem}[section]
\newtheorem{proposition}[theorem]{Proposition}
\newtheorem{corollary}[theorem]{Corollary}
\newtheorem{lemma}[theorem]{Lemma}
\newtheorem{remark}[theorem]{Remark}
\newtheorem{example}[theorem]{Example}
\newtheorem*{theorem*}{Theorem A}
\newtheorem*{corollary*}{Corollary}
\newcommand{\Aut}{\operatorname{Aut}}
\newcommand{\Diff}{\operatorname{Diff}}
\newcommand{\GL}{\operatorname{GL}}
\renewcommand{\L}{\mathcal{L}}
\title{Smooth toric actions are described by a single
vector field}
\author{F.J.~Turiel}
\address[F.J.~Turiel]{
Departamento de {\'A}lgebra, Geometr{\'\i}a y Topolog{\'\i}a,
Facultad de Ciencias,
Campus de Teatinos, s/n,
29071-M{\'a}laga, Spain}
\email[F.J.~Turiel]{turiel@uma.es}
\author{A.~Viruel}
\address[A.~Viruel]{
Departamento de {\'A}lgebra, Geometr{\'\i}a y Topolog{\'\i}a,
Facultad de Ciencias,
Campus de Teatinos, s/n,
29071-M{\'a}laga, Spain}
\email[A.~Viruel]{viruel@uma.es}
\thanks{Both authors are partially supported by
MEC-FEDER grant MTM2013-41768-P, and JA grants FQM-213. Second author is supported by XG grant EM2013/016.}
\begin{document}

\begin{abstract}
Consider a smooth effective action of  a torus $\mathbb{T}^n$ on a connected
$C^{\zinf}$-manifold $M$ of dimension $m$. Then $n\zmei m$.
In this work we show that if $n<m$, then there exist a complete vector
field  $X$ on $M$ such that the automorphism group of $X$ equals
$\mathbb T^n \zpor \mathbb{R}$, where the factor $\mathbb{R}$ comes
from the flow of $X$ and $\mathbb T^n$ is regarded as
a  subgroup of $\Diff(M)$.
\end{abstract}

\maketitle

\section{Introduction}\label{secI}

In a previous work \cite{TV}, and related to the
so called inverse Galois problem, we raised the question of whether or not a given
effective group action on a manifold is determined, or ``described", by non-classic
tensors in general, or more specifically, by vector fields. More
precisely: Consider an effective action of a Lie group $G$ on an $m$-manifold $M$,
thus we can think of $G$ as a subgroup of the group
$\Diff(M)$ of diffeomorphisms of $M$. Given a vector field $X$ on $M$, we say $X$ is a {\it describing vector field} for the $G$-action if the following
hold:
\begin{enumerate}[label={\rm (\arabic{*})}]
\item $X$ is complete and its flow $\zF_t$ commutes with the action
of $G$; so $G\leq \Aut(X)$.
\item\label{(2)} The group homomorphism
$$\begin{array}{ccc}
G\zpor{\mathbb R}&\rightarrow& \Aut(X)\\
(g,t)&\mapsto&g\zci\zF_{t}
\end{array}$$
is an isomorphism.
\end{enumerate}

Notice that we compare $\Aut(X)$ with $G\zpor{\mathbb R}$ instead of $G$ since we always have to take into account the flow of $X$ (see Remark \ref{remA}).

Within this setting, the main result in \cite{TV} shows that
any finite group action on a connected
manifold admits a describing vector field.
Here we extend this result to toric actions.

\begin{theorem*}
Consider an effective action of the torus $\mathbb T^n$ on a
 connected $m$-manifold $M$. Assume that $n\zmei m-1$.
Then there exist a describing vector field for this action.
\end{theorem*}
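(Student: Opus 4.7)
Since $\mathbb{T}^n$ is connected abelian and acts effectively, the principal isotropy is trivial; principal orbits have dimension exactly $n$, and the principal part $M_0\zco M$ is an open, dense, principal $\mathbb{T}^n$-bundle $\pi:M_0 \zfl B$ over a manifold $B$ with $\dim B = m - n \zmai 1$. Fix a $\mathbb{T}^n$-invariant Riemannian metric on $M$, giving an invariant principal connection on $\pi$.

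The plan is to build $X$ of the form $X = Z + \zsu_{i=1}^n f_i Y_i$, where the $Y_i$ are the fundamental vector fields generating $\mathbb{T}^n$, the $f_i$ are $\mathbb{T}^n$-invariant smooth functions on $M$, and $Z$ is a $\mathbb{T}^n$-invariant vector field projecting through $\pi$ to a chosen vector field $\bar X$ on $B$. Any such $X$ is $\mathbb{T}^n$-invariant, giving $\mathbb{T}^n \zmei \Aut(X)$ for free. The construction has three sub-steps: (a) produce a complete $\bar X$ on $B$ with $\Aut(\bar X) = \mathbb{R}$ by the techniques of \cite{TV}, arranging (via a $C^{\zinf}$-small perturbation) that $\bar X$ admits a fixed point $b_\ast \zpe B$ lying in the closure of some orbit dense in $B$; (b) horizontally lift $\bar X$ to $M_0$ and extend smoothly and equivariantly to all of $M$ via the slice theorem plus an invariant partition of unity, absorbing the extension discrepancy into $\zsu_i f_i Y_i$; (c) choose the invariant functions $f_i$ so that $(f_1(b_\ast),\dots,f_n(b_\ast)) \zpe \mathbb{R}^n$ is rationally independent, which makes the fibrewise flow $\zsu_i f_i(b_\ast) Y_i$ dense in $\pi^{-1}(b_\ast) \cong \mathbb{T}^n$.

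To analyse $\Aut(X)$, let $\psi \zpe \Aut(X)$. Over $b_\ast$ the closure of each $X$-orbit equals the entire fibre $\pi^{-1}(b_\ast)$, and after engineering $b_\ast$ to be the unique point with this dynamical property, $\psi$ must send $\pi^{-1}(b_\ast)$ to itself. The restriction $\psi|_{\pi^{-1}(b_\ast)}$ commutes with a dense one-parameter subgroup of $\mathbb{T}^n$, hence is a torus translation; after composing $\psi$ with a suitable element of $\mathbb{T}^n \zco \Aut(X)$, we may assume $\psi$ fixes $\pi^{-1}(b_\ast)$ pointwise. A rigidity argument then propagates $\mathbb{T}^n$-equivariance from this fibre to the whole of $M$. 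Granted equivariance, $\psi$ descends to $\bar\psi \zpe \Aut(\bar X) = \mathbb{R}$, so $\bar\psi = \zF_s^{\bar X}$ for some $s$; then $\psi \zci \zF_{-s}^X$ covers the identity on $B$ and has the form $p \mapsto g(\pi(p))\zpu p$ for some smooth $g: B\zfl \mathbb{T}^n$. Preservation of $X$ translates into $\bar X(g) = 0$ on $B$, so density of a $\bar X$-orbit forces $g$ constant, yielding $\psi \zpe \mathbb{T}^n \zpor \mathbb{R}$.

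The main obstacle is precisely the propagation of $\mathbb{T}^n$-equivariance from $\pi^{-1}(b_\ast)$ to the rest of $M$. In the finite-group case of \cite{TV}, connectedness immediately handles the analogous step because the group is discrete; for a continuous torus one must instead recover the infinitesimal $\mathbb{T}^n$-directions intrinsically from the vector field $X$, and orchestrate the local constructions (in particular at the singular strata, through the slice theorem) carefully enough that no spurious symmetries arise. Building $X$ so that Step (a) admits the distinguished fibre, Step (b) respects the singular stratification, and Step (c) rigidifies the fibrewise dynamics is the technical core of the proof.
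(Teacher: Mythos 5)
Your overall architecture (principal bundle $\pi\colon M_0\to B$ over the free part, $X$ equal to a horizontal lift of a rigid $\bar X$ plus fundamental fields with invariant coefficients, dense fibrewise dynamics over a distinguished fibre) is essentially the paper's, but the proposal leaves unproved exactly the two steps that carry the whole weight. First, you assert that ``a rigidity argument then propagates $\mathbb T^n$-equivariance from this fibre to the whole of $M$'' and yourself flag it as the technical core, but you never supply it. The paper's substitute is concrete: Lemma~\ref{lemB} computes the full commutant of $\xi+T$ on $\mathbb R^k\times\mathbb T^n$ (a Lie algebra of dimension $k^2+n$), Corollary~\ref{cor_de_2.2} deduces that any automorphism over a source's outset has the form $(x,\theta)\mapsto(\varphi(x),\theta+\lambda)$, and the artificial singularities of pairwise distinct even orders, placed at $k+1$ points whose linear $\alpha$-limits are in general position, force $\varphi$ to be a positive multiple of the identity, i.e.\ a time-$t$ map of the flow. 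Without some version of this, knowing that $\psi$ acts as a translation on the single fibre $\pi^{-1}(b_\ast)$ gives you nothing off that fibre.

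Second, your endgame ``$\bar X(g)=0$ on $B$, so density of a $\bar X$-orbit forces $g$ constant'' requires $\bar X$ to have a dense orbit in $B$. The vector fields produced by the techniques of \cite{TV} that you invoke in step (a) are reparametrized Morse gradients, so no orbit is dense; $\bar X\cdot g=0$ only makes $g$ locally constant on the union of outsets, and you are left with a family of elements $\lambda_i\in\mathbb T^n$ and times $t_i$ indexed by the sources, defined on a dense but disconnected set. Gluing these into a single pair $(\lambda,t)$ is a genuine step: the paper does it by first showing all $t_i$ coincide (using that $Z$ has no regular periodic trajectories when $\dim B\ge 2$, and a separate jet-matching lemma when $\dim B=1$ --- a case your proposal does not treat) and then invoking that an isometry of a connected Riemannian manifold is determined by its $1$-jet at one point, applied to a $\mathbb T^n$-invariant metric. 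Relatedly, your plan to extend the horizontal lift smoothly across the singular set $S$ via slices and a partition of unity risks making $S$ undetectable from $X$, so that automorphisms need not preserve $M-S$; the paper instead multiplies by an invariant function vanishing to infinite order exactly on $S$, so that $S$ is recovered as the infinite-order zero set of the resulting field.
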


The proof of the Theorem A involves three steps. First, the result is
established for free actions (Section \ref{secB}) and then extended to
effective ones (Section \ref{secC}), in both cases assuming $m-n\zmai 2$.
Finally the case $m-n=1$ is considered in Section  \ref{secD}.

\begin{remark}\label{remA}
{\rm
Notice that according to Proposition \ref{proAA},
if the $\mathbb T^n$-action on $M^m$ is effective then $n\zmei m$.

On one hand, when $n=m$, we can make the identification $M=\mathbb T^n$ endowed with
the natural  $\mathbb T^n$-action. In this case if $X$ is the fundamental vector field associated
to a dense affine vector field on $\mathbb T^n$ (see Section \ref{secA}), then
$\Aut(X)=\mathbb T^n$.

On the other hand, when $n<m$ no complete vector field $X$ on $M$ verifies
$\Aut(X)=\mathbb T^n$. Indeed, assume $\Aut(X)=\mathbb T^n$, and let
$X_1 ,\dots,X_n$ be a basis of the Lie algebra of fundamental vector fields
and $f_1 ,\dots,f_n$ any $\mathbb T^n$-invariant functions. Then
$[X_r ,\zsu_{j=1}^n f_j X_j ]=0$, $r=1,\dots, n$; so the flow of
$\zsu_{j=1}^n f_j X_j$ commutes with the action of $\mathbb T^n$ and,
as every element of the flow of $X$ belongs to $\Aut(X)$, with this flow too.
That is to say the  flow of $\zsu_{j=1}^n f_j X_j$ is included in $\Aut(X)$
and, necessarily, $\Aut(X)\znoi \mathbb T^n$ {\it contradiction}.

Thus our result is ``minimal" because the flow of $X$ is always included in $\Aut(X)$.}
\end{remark}

\begin{remark}\label{remB}
{\rm
Finally, notice that Theorem A cannot be extended to a general compact Lie group. In Section \ref{secE} we construct effective actions of
$SO(3)$ (Example \ref{ejeB}), and  of a non-connected
compact group of dimension two (Example \ref{ejeC}), for which there is no describing vector field.
}
\end{remark}

\noindent \textbf{Terminology:} The reader is supposed to be familiarized with our previous
paper  \cite{TV}. All structures and objects considered are real
$C^{\zinf}$ and manifolds are without boundary, unless another thing
is stated. For the general questions on Differential Geometry the reader is
referred to \cite{KN} and for those on Differential Topology to
\cite{HI}.
\medskip

\noindent \textbf{Acknowledgements:} The authors would like to thank
Prof.\ Arthur Wasserman for suggesting that our original result on finite
group actions could be extended to $S^1$-actions,
and for his helpful comments on the development of this work.

\section{Preliminary results on vector fields}\label{secA}
In this section we collect some results on vector fields that are needed in the following sections.
On $\mathbb R^k$ we set coordinates $x=(x_1 ,\dots,x_k )$, and define
 $\zx=\zsu_{j=1}^{k}x_{j}\zpar/\zpar x_{j}$.

\begin{lemma}\label{lemA}
For any function $g\colon \mathbb R^k \zfl\mathbb R$ with $g(0)=0$
there is a function $f\colon \mathbb R^k \zfl\mathbb R$ such that
 $\zx\zpu f=g$.
\end{lemma}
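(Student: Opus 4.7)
The plan is to integrate along the flow of $\zx$. Recall that the flow of $\zx=\zsu_{j=1}^{k}x_{j}\zpar/\zpar x_{j}$ is the dilation $\zF_{t}(x)=e^{t}x$, so for any smooth function $f$ one has $\zx\zpu f(x)=\frac{d}{dt}\Big|_{t=0}f(e^{t}x)$. Thus the equation $\zx\zpu f=g$ restricted to the orbit through $x\neq 0$ is the linear ODE $\frac{d}{dt}f(e^{t}x)=g(e^{t}x)$. Since every orbit approaches the origin as $t\zfl-\zinf$, and we may impose $f(0)=0$ (the hypothesis $g(0)=0$ is precisely the compatibility needed for a smooth solution to exist there), the natural candidate is
$$f(x)=\zil_{-\zinf}^{0}g(e^{t}x)\,dt=\zil_{0}^{1}\frac{g(sx)}{s}\,ds,$$
where the second expression comes from the substitution $s=e^{t}$.

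The only non-trivial point is smoothness at the origin, where the integrand appears singular. Here I would invoke Hadamard's lemma: since $g(0)=0$, we may write $g(y)=\zsu_{j=1}^{k}y_{j}h_{j}(y)$ for smooth functions $h_{j}$. Substituting $y=sx$ kills the apparent singularity, because
$$\frac{g(sx)}{s}=\zsu_{j=1}^{k}x_{j}h_{j}(sx),$$
which is smooth on $[0,1]\zpor\mathbb R^{k}$. Consequently
$$f(x)=\zsu_{j=1}^{k}x_{j}\zil_{0}^{1}h_{j}(sx)\,ds$$
is a genuinely smooth function on $\mathbb R^{k}$.

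Finally, I would verify $\zx\zpu f=g$. The cleanest route is to observe that for $r>0$, the substitution $u=sr$ in the defining integral yields $f(rx)=\zil_{0}^{r}g(ux)/u\,du$; differentiating in $r$ and evaluating at $r=1$ gives
$$\zx\zpu f(x)=\frac{d}{dr}\Big|_{r=1}f(rx)=\frac{g(rx)}{r}\Big|_{r=1}=g(x),$$
as desired.

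I do not anticipate a serious obstacle here: the construction is completely explicit, the flow of $\zx$ is linear, and Hadamard's lemma handles the regularity question in one line. The delicate point, if any, is merely the justification of smoothness at $x=0$, which is why the hypothesis $g(0)=0$ cannot be removed (otherwise $\zil_{0}^{1}g(sx)/s\,ds$ would diverge on every ray).
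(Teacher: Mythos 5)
Your proof is correct, and it takes a genuinely different route from the paper's. The paper does not use an explicit integral formula: it first solves the equation formally on Taylor series at the origin (the hypothesis $g(0)=0$ removes the degree-zero obstruction, since $\zx$ multiplies a homogeneous polynomial of degree $d$ by $d$), realizes the formal solution by Borel's theorem, then invokes Roussarie's theorem for the hyperbolic vector field $\zx$ to reduce to the case where $g$ vanishes on a neighborhood of the origin, and finally integrates along the flow on $\mathbb R^k-\{0\}\cong S^{k-1}\zpor\mathbb R$. Your argument replaces all of this with the single formula $f(x)=\zil_0^1 g(sx)\,s^{-1}\,ds$, with Hadamard's lemma ($g(y)=\zsu_{j}y_j h_j(y)$) disposing of both the convergence at $s=0$ and the smoothness of $f$ at the origin in one stroke; the verification $\zx\zpu f=g$ via $f(rx)=\zil_0^r g(ux)\,u^{-1}\,du$ is also correct. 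What your approach buys is brevity and elementarity; what the paper's heavier machinery buys is generality, since the Borel--Roussarie scheme is what one needs for an arbitrary hyperbolic linear field $\zsu_j\zl_j x_j\zpar/\zpar x_j$ with eigenvalues of mixed sign, where the orbits do not all limit to the origin and no such closed formula is available. For the radial field $\zx$ your computation is entirely adequate; the only step worth making explicit is the differentiation under the integral sign to all orders, which is routine because the integrand $\zsu_j x_j h_j(sx)$ is smooth on $[0,1]\zpor\mathbb R^k$.
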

\begin{proof}
Although the proof of this result is routine we outline it here. Let
$\mathcal T$ denote the Taylor's series of $g$ at $0\zpe\mathbb R^k$.
Then there exists a series $\mathcal S$ such that formally
$\zx\zpu\mathcal S=\mathcal T$. According to Borel's Theorem \cite[Theorem 1.5.4]{NA}
there is a function $\zf$ whose Taylor's series at origin
equals $\mathcal S$, and therefore making $g-\zx\zpu\zf$ we may
suppose $\mathcal T=0$.

Since $\zx$ is hyperbolic at origin, following \cite[Theorem 10, page 38]{RRO}
there exist a function
$\tilde f\colon\mathbb R^k \zfl\mathbb R$ such that $g-\zx\zpu\tilde f$
vanishes on a open neighborhood $A$ of $0\zpe\mathbb R^k$.
Therefore it suffices to show the result when $g_{\zbv A}=0$.

But $\mathbb R^k -\{0\}$ can be identified to $S^{k-1}\zpor\mathbb R$
in such a way that $\zx=\zpar/\zpar t$ where $t$ is the variable in
$\mathbb R$ and $S^{k-1}\zpor(-\zinf,1)$ corresponds to
$B_\ze (0)-\{0\}\zco A$ for some radius $\ze>0$.

Finally set $f=\zil_{0}^{t}g{\operatorname d}s$ on $S^{k-1}\zpor\mathbb R$
and $f(0)=0$.
\end{proof}

Recall that a vector field $T$ on $\mathbb T^n$, endowed with coordinates
$\zh=(\zh_1 ,\dots,\zh_n )$, is named {\it affine} if
$T=\zsu_{r=1}^{n}a_{r}\zpar/\zpar \zh_{r}$ where
$a_1 ,\dots,a_n \zpe \mathbb R$. Then, the trajectories of an affine
vector field are dense if and only if $a_1 ,\dots,a_n$ are rationally
independent; {\it in this case we say that $T$ is dense}.

\begin{lemma}\label{lemB}
On $\mathbb R^k \zpor \mathbb T^n$ with coordinates
$(x,\zh)=(x_1 ,\dots,x_k ,\zh_1 ,\dots,\zh_n )$ consider the vector field
$X=\zx+T$, where $T$ is dense. Then $\L_X$, the set of all vector fields on $\mathbb R^k \zpor \mathbb T^n$
which commute with $X$, is a Lie algebra of dimension $k^2 +n$ with basis
$$\left\{ x_j {\frac {\zpar} {\zpar x_{\zlma}}},{\frac {\zpar}
{\zpar \zh_r}}\right\},\,\, j,\zlma=1,\dots,k;\, r=1,\dots,n.$$
\end{lemma}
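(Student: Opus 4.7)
The plan is to verify both inclusions. First, direct computation shows each of the $k^2+n$ listed vector fields commutes with $X=\xi+T$: since $T$ involves only $\theta$-derivatives with constant coefficients, $[T,x_j\partial/\partial x_\ell]=[T,\partial/\partial\theta_r]=0$, while $[\xi,x_j\partial/\partial x_\ell]=\xi(x_j)\partial/\partial x_\ell-x_j\partial/\partial x_\ell=0$ and $[\xi,\partial/\partial\theta_r]=0$. These vector fields are clearly $\mathbb R$-linearly independent, giving $\dim\mathcal L_X\geq k^2+n$. For the reverse inclusion, I would take $Y=\sum_j f_j\partial/\partial x_j+\sum_r g_r\partial/\partial\theta_r\in\mathcal L_X$; writing $[X,Y]=0$ in coordinates (using $\xi(x_j)=x_j$, $T(x_j)=0$, $\xi(\theta_r)=0$, $T(\theta_r)=a_r$) reduces to the scalar equations
$$X\cdot f_j=f_j\quad(j=1,\dots,k),\qquad X\cdot g_r=0\quad(r=1,\dots,n).$$
It therefore suffices to classify smooth solutions of $Xu=\lambda u$ on $\mathbb R^k\times\mathbb T^n$ for $\lambda\in\{0,1\}$ and show they are exactly the constants when $\lambda=0$ and exactly the linear forms $\sum_\ell c_\ell x_\ell$ when $\lambda=1$.

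For this classification I would formally Taylor-expand $u\sim\sum_\alpha c_\alpha(\theta)x^\alpha$ with $c_\alpha\in C^\infty(\mathbb T^n)$, so that $Xu=\lambda u$ yields, term by term, $Tc_\alpha=(\lambda-|\alpha|)c_\alpha$. Setting $\mu=\lambda-|\alpha|$ and integrating along the flow of $T$ gives $c_\alpha(\theta+ta)=e^{\mu t}c_\alpha(\theta)$. If $\mu=0$ then $c_\alpha$ is $T$-invariant, hence constant by density of the orbits of $T$. If $\mu\neq 0$, the bounded function $c_\alpha$ on the compact torus would also satisfy $|c_\alpha(\theta+ta)|=e^{\mu t}|c_\alpha(\theta)|$ with unbounded right-hand side as $t\to\pm\infty$, forcing $c_\alpha\equiv 0$. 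Thus only the multi-indices with $|\alpha|=\lambda$ contribute, and their coefficients are constants: the formal Taylor polynomial of $u$ is a constant if $\lambda=0$ and $\sum_\ell c_\ell x_\ell$ if $\lambda=1$.

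It remains to rule out a nonzero flat remainder. Let $P$ be this polynomial and $v=u-P$; then $Xv=\lambda v$ and $v$ is flat along $\{0\}\times\mathbb T^n$. Since $\xi$ and $T$ commute and act on disjoint coordinates, the flow of $X$ is $\Phi_t(x,\theta)=(e^tx,\theta+ta)$, so $v\circ\Phi_t=e^{\lambda t}v$. Flatness of $v$ together with compactness of $\mathbb T^n$ provides, for each $N$, a constant $C_N$ with $|v(y,\phi)|\leq C_N|y|^N$ on a neighborhood of $\{0\}\times\mathbb T^n$; since $(e^tx,\theta+ta)$ enters this neighborhood for $t\ll 0$, one obtains
$$|v(x,\theta)|=e^{-\lambda t}|v(e^tx,\theta+ta)|\leq C_N\,e^{(N-\lambda)t}|x|^N\longrightarrow 0\quad\text{as }t\to-\infty$$
for any $N>\lambda$, so $v\equiv 0$ and $u=P$. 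The main obstacle is precisely this bridge from formal to smooth: the preceding step only determines the jet of $u$ along $\{0\}\times\mathbb T^n$, so eliminating a flat remainder requires the backward-time contraction of $\xi$ toward $\{0\}\times\mathbb T^n$; meanwhile the density hypothesis on $T$ is what makes the jet-level classification work in the first place.
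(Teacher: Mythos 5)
Your proof is correct, and it takes a genuinely different route from the paper's. The paper argues geometrically: it first observes that $\{0\}\zpor\mathbb T^n$ is the set of points whose $X$-trajectory has compact closure, hence is preserved by the flow of any $Y\zpe\L_X$, forcing $f_j(0,\zh)=0$; it then gets each $g_r$ constant from $X\zpu g_r=0$ together with the fact that every trajectory accumulates on the single dense orbit filling $\{0\}\zpor\mathbb T^n$; and finally it proves linearity of the $f_j$ by factoring $f=xg$ in the case $k=1$ and, for $k\zmai 2$, restricting to the invariant sets $E\zpor\mathbb T^n$ over vector lines $E$ and using that a function linear on every line through the origin is linear. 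You instead reduce everything to the single eigenfunction problem $X\zpu u=\zl u$, $\zl\zpe\{0,1\}$, classify the Taylor jet along $\{0\}\zpor\mathbb T^n$ by the resonance bookkeeping $T\zpu c_\za=(\zl-|\za|)c_\za$ (where density of $T$ kills the nonresonant coefficients via boundedness on the compact torus and makes the resonant ones constant), and then eliminate the flat remainder by the backward-time contraction of $\zx$ toward $\{0\}\zpor\mathbb T^n$. Your derivation of the scalar equations $X\zpu f_j=f_j$ and $X\zpu g_r=0$ is correct, as is the contraction estimate $|v(x,\zh)|\zmei C_N e^{(N-\zl)t}|x|^N\zfl 0$. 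What your approach buys is uniformity (both equations are handled by one argument, and it would extend to other eigenvalues or higher-order resonances) and an explicit accounting of where density of $T$ and hyperbolicity of $\zx$ are each used; what the paper's approach buys is the avoidance of any jet or Taylor-remainder machinery, replacing it with elementary restriction arguments to invariant submanifolds. Both are complete.
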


\begin{proof}
Let $Y\in\L_X$ be the vector field defined as $Y=\zsu_{j=1}^k f_j (x,\zh)\zpar/\zpar x_j
+\zsu_{r=1}^n g_r (x,\zh)\zpar/\zpar \zh_r$, and let $\zF_t$
denote its flow. Since $\{0\}\zpor\mathbb T^n$ is compact,
$\zF_t$ is defined on $(\{0\}\zpor\mathbb T^n )\zpor(-\ze,\ze)$ for some
$\ze>0$.

Observe that $\{0\}\zpor\mathbb T^n$ is the set of all points whose
$X$-trajectory has compact adherence. Therefore
$\zF_t (\{0\}\zpor\mathbb T^n )\zco\{0\}\zpor\mathbb T^n$ for any
$t\zpe(-\ze,\ze)$, what implies that $Y$ is tangent to
$\{0\}\zpor\mathbb T^n$, and therefore $f_j (0,\zh)=0$, for $j=1,\dots,k$.

A computation, taking into account that every $\zpar/\zpar\zh_r$ commutes
with $X$, yields $$[X,Y]=\widetilde Y
+\zsu_{r=1}^n (X\zpu g_r )\zpar/\zpar \zh_r$$ where $\widetilde Y$ is a
functional combination of $\zpar/\zpar x_i$'s.
Therefore $X\zpu g_r =0$, i.e.\ $g_r$ is constant
along the trajectories of $X$, for $r=1,\dots,n$. But the $\zw$-limit of these trajectories is
$\{0\}\zpor\mathbb T^n$ and $g_r$ is constant on this set because
$\{0\}\zpor\mathbb T^n$ is the adherence of the trajectory of any of its
points, so each  $g_r$ is constant on
$\mathbb R^k \zpor \mathbb T^n$.

By considering $Y-\zsu_{r=1}^n g_r \zpar/\zpar \zh_r$ one may suppose
$Y=\zsu_{j=1}^k f_j (x,\zh)\zpar/\zpar x_j$ where each
$f_j (\{0\}\zpor\mathbb T^n)=0$. Now from $[X,Y]=0$ follows
$X\zpu f_j =f_j$, $j=1,\dots,k$.

On the other hand given $f\colon\mathbb R^k \zpor\mathbb T^n
\zfl\mathbb R$ such that $f(\{0\}\zpor\mathbb T^n)=0$ and $X\zpu f=f$,
then $f$ does not depend on $\zh$ and it is linear on $x$. Indeed if $k=0$
it is obvious; assume $k=1$ by the moment. Then $f=xg$ for some function
$g$ since $f$ vanishes on $\{0\}\zpor\mathbb T^n$, and $X\zpu f=f$
becomes $X\zpu g=0$. Therefore $g$ is constant along the trajectories of $X$
and, by the same reason as before, constant on
$\mathbb R\zpor\mathbb T^n$.

Now suppose $k\zmai 2$. Let $E$ be any vector line in $\mathbb R^k$. As
$X$ is tangent to $E\zpor\mathbb T^n$ and the restriction of $\zx$ to $E$
is still the radial vector field, $f\colon E\zpor\mathbb T^n \zfl\mathbb R$ is
independent of $\zh$ and linear on $E$ (it is just the case $k=1$).
Since the union of all the vector lines
$E$ equals $\mathbb R^k$, it follows that $f$ does not depend on $\zh$ and
$f\colon\mathbb R^k \zfl \mathbb R$ is linear on each $E$. But, as it is well
known, this last property implies that $f\colon\mathbb R^k \zfl \mathbb R$
is linear.

In short every $f_j$ is linear on $x$ and independent of $\zh$.
\end{proof}

\begin{corollary}\label{cor_de_2.2}
If $F\colon \mathbb R^k \zpor \mathbb T^n \zfl
 \mathbb R^k \zpor \mathbb T^n$ is an automorphism of
$X=\zx+T$ and $T$ is dense, then there exist an isomorphism $\zf\colon\mathbb R^k
\zfl\mathbb R^k$ , and an element $\zl\zpe \mathbb T^n$ such that
$F(x,\zh)=(\zf(x),\zh+\zl)$.
\end{corollary}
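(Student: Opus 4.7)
The plan is to exploit the commutation $F\zci\zF_t =\zF_t \zci F$, where $\zF_t (x,\zh)=(e^t x,\zh+tT)$ denotes the flow of $X$, in two stages: first extract the shift $\zl$ on $\mathbb T^n$ from the action of $F$ on the compact core $\{0\}\zpor\mathbb T^n$, and then use a contraction argument along the $\mathbb R^k$-direction to rigidify $F$ globally.

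As observed in the proof of Lemma \ref{lemB}, $\{0\}\zpor\mathbb T^n$ is intrinsically characterized as the set of points whose $X$-trajectory has compact adherence, so any automorphism of $X$ must preserve it; in particular $F(\{0\}\zpor\mathbb T^n)=\{0\}\zpor\mathbb T^n$. The restriction $F|_{\{0\}\zpor\mathbb T^n}$ is a diffeomorphism of $\mathbb T^n$ commuting with the flow of $T$; since the orbits of $T$ are dense, continuity forces it to commute with every translation of $\mathbb T^n$, hence to be itself a translation $\zh\mapsto\zh+\zl$ for some $\zl\zpe\mathbb T^n$. Composing with the automorphism $(x,\zh)\mapsto(x,\zh-\zl)$ of $X$, I may assume $\zl=0$, so $F$ fixes $\{0\}\zpor\mathbb T^n$ pointwise.

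Writing $F(x,\zh)=(G(x,\zh),H(x,\zh))$ with $G(0,\zh)=0$ and $H(0,\zh)=\zh$, the commutation with $\zF_t$ translates into
$$G(e^t x,\zh+tT)=e^t G(x,\zh),\qquad H(e^t x,\zh+tT)=H(x,\zh)+tT.$$
Differentiating in $x$ at $x=0$, the density of $\{\zh+tT\}_t$ in $\mathbb T^n$ forces $D_x G|_{(0,\zh)}=A$ to be a fixed invertible matrix independent of $\zh$, while the factor $e^{-t}$ appearing in the corresponding identity for $H$, together with the boundedness of continuous functions on the compact torus, forces $D_x H|_{(0,\zh)}=0$. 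Setting $G(x,\zh)=Ax+R(x,\zh)$ and $H(x,\zh)=\zh+S(x,\zh)$, Taylor's theorem and the compactness of $\mathbb T^n$ yield uniform estimates $R(x,\zh),S(x,\zh)=O(|x|^2 )$ as $x\zfl 0$.

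The technical heart of the proof is to globalize these local vanishing statements. The remainders satisfy $R(e^t x,\zh+tT)=e^t R(x,\zh)$ and $S(e^t x,\zh+tT)=S(x,\zh)$. Fixing $(x_0 ,\zh_0 )$ and letting $t\zfl -\zinf$, the coordinate $e^t x_0 \zfl 0$ contracts to the origin while $\zh_0 +tT$ stays in the compact torus, so the uniform $O(|x|^2 )$ bounds give $e^{-t}R(e^t x_0 ,\zh_0 +tT)=O(e^t )\zfl 0$ and $S(e^t x_0 ,\zh_0 +tT)=O(e^{2t})\zfl 0$, forcing $R(x_0 ,\zh_0 )=S(x_0 ,\zh_0 )=0$. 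Hence $G(x,\zh)=Ax$ and $H(x,\zh)=\zh$ everywhere, and reinserting the shift recovers $F(x,\zh)=(\zf(x),\zh+\zl)$ with $\zf(x)=Ax$ a linear isomorphism of $\mathbb R^k$. The main obstacle is precisely this globalization: leveraging the contractive direction $e^t \zfl 0$ of the flow together with the compactness of $\mathbb T^n$ to propagate the linear behavior from the zero section out to the entire manifold.
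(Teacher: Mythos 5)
Your proof is correct, and it takes a genuinely different route from the paper's. The paper argues through the Lie algebra $\mathcal{L}_X$ of vector fields commuting with $X$ computed in Lemma~\ref{lemB}: $F$ induces an automorphism of $\mathcal{L}_X$, and by identifying the singular elements (which cut out the foliation $d\theta_1=\dots=d\theta_n=0$) and the centre, one deduces that $F$ splits as $(\varphi(x),\psi(\theta))$ with $\psi$ affine on $\mathbb{T}^n$; the rational independence of the coefficients of $T$ then forces the linear part of $\psi$ to be the identity via an eigenvector argument, and linearity of $\varphi$ is imported from Lemma~3.4 of \cite{TV}. You instead work directly with the flow $\Phi_t(x,\theta)=(e^tx,\theta+tT)$: the invariant core $\{0\}\times\mathbb{T}^n$ (characterized dynamically, as in the proof of Lemma~\ref{lemB}) produces the translation $\lambda$, and the contraction $e^tx\to0$ as $t\to-\infty$, combined with the uniform second-order Taylor bound coming from compactness of the torus, propagates the linearization from the zero section to all of $\mathbb{R}^k\times\mathbb{T}^n$. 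Your argument is self-contained --- it uses neither Lemma~\ref{lemB} nor the external reference --- and yields linearity of $\varphi$ directly, at the price of some analytic estimates; the paper's argument is softer and reuses machinery it has already set up. Two small points you should make explicit: invertibility of $A$ follows because the Jacobian of $F$ at $(0,\theta)$ is block-triangular with blocks $A$ and the identity (or a posteriori, since a diffeomorphism of the form $(Ax,\theta+\lambda)$ forces $A$ invertible); and the estimate $S=O(|x|^2)$ should be read either for a lift of $S$ to $\mathbb{R}^n$ (which exists because $S$ vanishes on $\{0\}\times\mathbb{T}^n$, a deformation retract of the total space) or, more simply, for the torus distance $d(H(x,\theta),\theta)$. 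Neither point affects the validity of the argument.
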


\begin{proof}
The diffeomorphism $F$ induces an isomorphism on $\L_X$, the Lie algebra described in Lemma~\ref{lemB}. Now observe that:
\begin{enumerate}
\item The
only elements in $\L_X$ with singularities are the elements $\zsu_{j,\zlma=1}^k a_{j\zlma}x_j\zpar/\zpar x_{\zlma}$, where
$(a_{j\zlma})\zpe\GL(n,\mathbb
R)$. Besides they give rise to the foliation
$d\zh_1 =\dots=d\zh_n =0$ (extend it to $\{0\}\zpor\mathbb T^n$ by
continuity); so this foliation is an invariant of $F$.

\item The center of $\L_X$
is spanned by $\zx, \zpar/\zpar\zh_1 ,
\dots,\zpar/\zpar\zh_r$. But the adherences of the trajectories of a vector
field  $b\zx+b_1 \zpar/\zpar\zh_1 +\dots+b_r \zpar/\zpar\zh_r$
are always tori if
and only if $b=0$, so $F$ sends the Lie subalgebra
spanned by $ \zpar/\zpar\zh_1 ,\dots,\zpar/\zpar\zh_r$ into itself.
\end{enumerate}
These two facts imply that $F(x,\zh)=(\zf(x),\zq(\zh))$ where
$\zq\colon\mathbb T^n \zfl\mathbb T^n$ is an affine transformation
of $\mathbb T^n$; that is
$$\zq(\zh)=\left(\zsu_{j=1}^{n}c_{1j}\zh_{j},\dots,
\zsu_{j=1}^{n}c_{nj}\zh_{j})\right)+\zl$$
where $(c_{\zlma j})\zpe\GL(n,\mathbb Z)$ and $\zl\zpe\mathbb T^n$.
As $X=T$ on $\{0\}\zpor\mathbb T^n$, which is an invariant of $F$,
it follows  that $(a_1 ,\dots,a_n )$ is an eigenvector
of $(c_{\zlma j})$ whose eigenvalue
equals 1. But in this case $a_1 ,\dots,a_n$ are rationally dependent unless
$(c_{\zlma j})=Id$. In short $\zq(\zh)=\zh+\zl$.

On the other hand $\zf\colon\mathbb R^k \zfl\mathbb R^k$ has to be an automorphism of $\zx$, which implies that $\zf$ is an isomorphism
\cite[Lemma 3.4]{TV}.
\end{proof}

\begin{lemma}\label{lemC}
On $\mathbb R^k \zpor \mathbb T^n$ one considers the vector field
$\widetilde X =\tilde\zx +\widetilde T$ where
$\tilde\zx=\zsu_{j=1}^{k}\tilde f_{j}(x)\zpar/\zpar x_{j}$ and
$\widetilde T=\zsu_{i=1}^{n}\tilde g_{r}(x)\zpar/\zpar \zh_{r}$.
Assume that on $\mathbb R^k$ the following hold:
\begin{enumerate}[label={\rm (\alph{*})}]
\item $\tilde\zx$ is complete,
\item $\tilde\zx(0)=0$ and its linear part at the origin is a positive
multiple of identity,
\item the outset of the origin equals $\mathbb R^k$.
\end{enumerate}

Then there exists a self-diffeomorphism of $\mathbb R^k \zpor \mathbb T^n$
that commutes with the natural $\mathbb T^n$-action and
transforms $\widetilde X$ into
$$b\zx+\zsu_{r=1}^{n}b_{r} {\frac {\zpar} {\zpar \zh_r}}$$
%$$b\zsu_{j=1}^{k}x_{j}{\frac {\zpar} {\zpar x_j}}
%+\zsu_{r=1}^{n}b_{r} {\frac {\zpar} {\zpar \zh_r}}$$
where $b\zpe\mathbb R^+$ and $b_1 ,\dots,b_n \zpe\mathbb R$.
\end{lemma}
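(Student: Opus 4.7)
The plan is to transform $\widetilde X$ in two stages: first normalize the $\mathbb{R}^k$-component $\tilde\xi$ to a scalar multiple of the Euler field $\xi$, and then kill the residual $x$-dependence in the angular coefficients using a $\mathbb{T}^n$-equivariant shear.

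For the first stage I would rely on smooth linearization of $\tilde\xi$. By hypothesis~(b), the singularity of $\tilde\xi$ at the origin is hyperbolic with linear part $c\cdot\mathrm{Id}$ for some $c>0$, so in particular there are no resonances. A Sternberg-type theorem then provides a local $C^{\infty}$-diffeomorphism conjugating $\tilde\xi$ to $c\xi$ on a neighborhood of $0\in\mathbb{R}^k$. Completeness (a) together with the outset hypothesis (c) would let me propagate this local conjugation along the flow to a global diffeomorphism $\varphi\colon\mathbb{R}^k\to\mathbb{R}^k$ with $\varphi_{*}\tilde\xi=c\xi$ (the argument is of the same flavor as the globalization used in \cite[Lemma~3.4]{TV}). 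The product $(\varphi,\mathrm{id}_{\mathbb{T}^n})$ is automatically $\mathbb{T}^n$-equivariant and brings $\widetilde X$ to
\[
c\xi+\sum_{r=1}^{n}h_r(x)\,\frac{\partial}{\partial\theta_r},\qquad h_r=\tilde g_r\circ\varphi^{-1}.
\]

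For the second stage, set $b=c$ and $b_r=h_r(0)$. Since each function $(b_r-h_r)/c$ vanishes at the origin, Lemma~\ref{lemA} produces smooth $\psi_r\colon\mathbb{R}^k\to\mathbb{R}$ satisfying $\xi\cdot\psi_r=(b_r-h_r)/c$. The shear
\[
\Phi(x,\theta)=\bigl(x,\,\theta+(\psi_1(x),\ldots,\psi_n(x))\bigr)
\]
is a $\mathbb{T}^n$-equivariant self-diffeomorphism of $\mathbb{R}^k\times\mathbb{T}^n$, and a direct chain-rule calculation, using that all coefficients depend only on $x$, gives
\[
\Phi_{*}\!\left(c\xi+\sum_{r=1}^{n}h_r\,\frac{\partial}{\partial\theta_r}\right)=c\xi+\sum_{r=1}^{n}\bigl(h_r+c\,\xi\cdot\psi_r\bigr)\frac{\partial}{\partial\theta_r}=c\xi+\sum_{r=1}^{n}b_r\,\frac{\partial}{\partial\theta_r},
\]
which is the desired normal form with $b=c>0$.

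The hard part is Stage~1. Local smooth linearization at a hyperbolic source with scalar linear part is classical and entirely resonance-free; the delicate work is the \emph{global} step, where hypothesis~(c) is essential to guarantee that forward orbits of a small neighborhood of $0$ exhaust $\mathbb{R}^k$, and hypothesis~(a) is essential for those orbits to be defined for all time so that the germs of the linearizing diffeomorphism glue consistently. Stage~2 is then a routine application of Lemma~\ref{lemA}.
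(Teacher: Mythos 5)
Your proposal is correct and follows essentially the same route as the paper: first apply (a global version of) Sternberg's linearization theorem to reduce $\tilde\zx$ to a positive multiple of the radial field $\zx$, and then use Lemma~\ref{lemA} to solve $\zx\zpu\zq_r=$ (angular coefficient minus its value at the origin) and remove the $x$-dependence of the $\zpar/\zpar\zh_r$-coefficients by a $\mathbb T^n$-equivariant shear $(x,\zh)\mapsto(x,\zh+\zq(x))$. The only differences are presentational: you spell out the chain-rule computation and the globalization of the local linearization, which the paper delegates to its references.
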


\begin{proof}
By Sternberg's linearization theorem \cite{SST},
see \cite[Proposition 2.1]{TV}, there exists a diffeomorphism $f\colon\mathbb R^k \zfl\mathbb R^k$
transforming $\tilde\zx$ into $b\zsu_{j=1}^k x_j \zpar/\zpar x_j$ with $b>0$.  Dividing by $b$ we may suppose
$\tilde\zx=\zx$.

By Lemma \ref{lemA} there are functions $\zf_1 ,\dots,\zf_n \colon\mathbb
R^k \zfl\mathbb R$ such that $\zx\zpu\zf_r =g_r -g_r (0)$, $r=0,\dots,n$.
Now, if $\zf=(\zf_1 ,\dots,\zf_n )$ and
$\tilde\zp\colon\mathbb R^n \zfl\mathbb T^n$ is the canonical covering, then the diffeomorphism $F\colon \mathbb R^k \zpor \mathbb T^n \zfl
 \mathbb R^k \zpor \mathbb T^n$ given by $F(x,\zh)
=(x,\zh-\tilde\zp\zci\zf)$,
transforms $\widetilde X$ into $\zx+\zsu_{r=1}^n g_r (0)\zpar/\zpar\zh_r$.
\end{proof}

\begin{remark}\label{remB2}
{\rm If $\widetilde X$ matches the hypotheses of Lemma \ref{lemC} and
$h\colon \mathbb R^k \zfl \mathbb R$ is a positive bounded function, then
$h\widetilde X$ satisfies these hypotheses too (when $h$ is regarded  as
 a function on  $\mathbb R^k \zpor \mathbb T^n$ in the obvious way).
Therefore $h\widetilde X$ can be written as in Lemma \ref{lemC} for a
suitable choice of coordinates, and thus the control of its automorphisms
becomes simple.}
\end{remark}

\section{Free actions}\label{secB}
In this section we assume there is a free $\mathbb T^n$-action on the
connected $m$-manifold $M$, where $m-n\zmai 2$. This gives rise to a principal fibre
bundle $\zp\colon M\zfl B$ whose structure group is $\mathbb T^n$ and
$B$ is a connected manifold of dimension $k=m-n\zmai 2$. Then, we construct
a suitable vector field on $B$ that is later on lifted
to $M$ by means of a connection.

In order to construct the vector field on $B$, we closely follow along the lines in
 \cite[Section 3]{TV} applied to the case of the trivial group action on $B$.

Consider a Morse function $\zm\colon B\zfl{\mathbb R}$ that is
proper and non-negative. Denote by $C$ the set of its
critical points, which is closed and discrete, and therefore countable. As $B$ is
paracompact, there exists a locally finite family $\{A_{p}\}_{p\zpe C}$
of disjoint open set such that $p\zpe A_{p}$, $p\zpe C$.

Following along the lines in \cite[Section 3]{TV}, there exist a
Riemannian metric ${\tilde g}$ on $B$ such that the gradient vector
field $Y$ of $\zm$ is complete and, besides, around each $p\zpe C$
there are coordinates $(x_{1},\ldots,x_{k})$ with $p\zeq 0$ and
$Y=\zsu_{j=1}^{k}\zl_{j}x_{j}\zpar/\zpar x_{j}$,
$\zl_1 ,\dots,\zl_k \zpe\mathbb R -\{0\}$, where

\begin{enumerate}[label={\rm (\arabic{*})}]
\item $\zl_1 =\dots=\zl_k >0$  if $p$ is a source of $Y$, that is a
 minimum of $\zm$,
\item $\zl_1 =\dots=\zl_k <0$  if $p$ is a sink of $Y$ (a maximum
of $\zm$),
\item some $\zl_j$ are positive and the remainder negative if $p$ is
a saddle.
\end{enumerate}

Note that these properties still hold when $Y$ is multiplied by a positive
bounded function, since they only depend on the
Sternberg's Theorem.

Let $I$ be the set of local minima of $\zm$, that is the set of
sources of $Y$, and $S_{i}$, $i\zpe I$, the outset of $i$ relative
to $Y$. Now Lemma 3.3 of \cite{TV} becomes:

\begin{lemma}\label{lemD}
The family $\{ S_{i}\}_{i\zpe I}$ is locally finite and the
set $\zung_{i\zpe I}S_{i}$ is dense in $B$.
\end{lemma}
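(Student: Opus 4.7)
The plan is to prove the two assertions separately: local finiteness will follow from the properness of $\zm$, and density from an analysis of the $\za$-limits of the gradient flow $Y$.

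For local finiteness, given $b\zpe B$ I would pick a relatively compact open neighborhood $U$ of $b$ and set $c=\sup_U \zm <\zinf$. If $S_i \zin U\znoi\zva$, then any $y\zpe S_i \zin U$ has its backward $Y$-trajectory converging to $i$, and since $Y\zpu\zm=|Y|_{\tilde g}^2 \zmai 0$ along the gradient flow one has $\zm(i)\zmei\zm(y)\zmei c$. Hence every such source $i$ lies in the compact sublevel set $\zm^{-1}([0,c])$, whose intersection with the discrete critical set $C$ is finite; only finitely many $i\zpe I$ can therefore contribute to any neighborhood of $b$.

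For density, I would first show that every $b\zpe B$ belongs to the unstable manifold of a unique critical point. Indeed, the backward $Y$-trajectory of $b$ is contained in $\zm^{-1}([0,\zm(b)])$, compact by properness, so its $\za$-limit $\za(b)$ is non-empty, compact, connected and flow-invariant. Invariance forces $\zm$ to be constant on $\za(b)$; but along any non-stationary orbit $Y\zpu\zm=|Y|_{\tilde g}^{2}$ is strictly positive, so every point of $\za(b)$ must lie in $C$. Connectedness of $\za(b)$ together with the discreteness of $C$ then reduces $\za(b)$ to a single point $p\zpe C$, whence $b$ lies in the unstable manifold $W^{u}(p)$. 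The Sternberg normal form listed in items (1)-(3) shows that $W^{u}(p)$ is open and $k$-dimensional exactly when $p$ is a source, and $\dim W^{u}(p)<k$ in every other case. Therefore the complement of $\zung_{i\zpe I}S_{i}$ is a countable union of smooth immersed submanifolds of dimension strictly less than $k$, hence has empty interior in $B$, and density follows.

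The main technical point I expect is the reduction of $\za(b)$ to a single critical point; this hinges on the strict increase of $\zm$ along non-stationary orbits of $Y$ together with the discreteness of $C$, both of which are built into the Morse hypothesis on $\zm$. Once this step is in place, both conclusions of the lemma fall out by routine bookkeeping.
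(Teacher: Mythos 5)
Your argument is correct, and it supplies in full the standard Morse-theoretic reasoning that the paper itself omits: the text simply asserts that this is Lemma~3.3 of \cite{TV} adapted to the trivial group action on $B$, so there is no in-paper proof to diverge from. Both halves of your proof are sound --- properness of $\mu$ plus monotonicity along gradient lines for local finiteness, and the decomposition of $B$ into unstable manifolds for density --- with the only point worth making explicit being that the final ``empty interior'' claim for the countable union of positive-codimension immersed unstable manifolds is justified by a measure-zero argument (each is a countable union of embedded pieces of Lebesgue measure zero) rather than by Baire category, since these sets need not be closed.
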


In what follows, and by technical reasons, one makes use of the notion of
\emph{order of nullity} instead of \emph{chain}. More exactly, for every $i\zpe I$ we
choose a subset $P_i$ of $A_i$ with $k+1$ points close enough to $i$
but different from it, in such a way that the linear $\za$-limits of their
trajectories are in general position (see \cite[pags.\ 319 and 320]{TV}
for definitions).

Set $P=\zung_{i\zpe I}P_i$. Consider an injective map
$p\zpe P\mapsto n_p \zpe\mathbb N -\{0\}$. Let $\mathbb N'$ be the
image of $P$.

By definition a differentiable object has {\it order $r$ at point $q$} if its
$(r-1)$-jet at this point vanishes but its $r$-jet does not; for instance
$Y$ has order one at sources, sinks and saddles.

Since $\{A_i \}_{i\zpe I}$ is still locally finite, one may construct a bounded
function $\zt\colon B\zfl\mathbb R$ such that $\zt$ is positive on $B-P$
and has order $2n_p$ at every $p\zpe P$.

Put $Z=\zt Y$. Then $Z^{-1}(0)=Y^{-1}(0)\zun P$; that is the singularities
of $Z$ are the sources, sinks and saddles of $Y$ plus the points of $P$,
that we call {\it artificial singularities} and whose order is even $\zmai 2$.
Note that two different artificial singularities have different orders.

Let $R_i$, $i\zpe I$, be the $Z$-outset of $i$. As $S_i -R_i$ is the union of
$k+1$ half-trajectories of $Y$ one has:

\begin{lemma}\label{lemE}
The family $\{ R_{i}\}_{i\zpe I}$ is locally finite and the
set $\zung_{i\zpe I}R_{i}$ is dense in $B$.
\end{lemma}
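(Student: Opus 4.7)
The plan is to deduce Lemma~\ref{lemE} from Lemma~\ref{lemD} via the hint that $S_i-R_i$ is a union of $k+1$ half-trajectories of $Y$. Local finiteness will follow immediately from $R_i\zcoi S_i$, and density will come from the fact that finitely many $1$-dimensional arcs cannot fill any open subset of a $k$-manifold with $k\zmai 2$.

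First I would identify $S_i-R_i$ precisely. The points of $P_i\zcoi A_i$ are chosen close enough to the source $i$ to ensure $P_i\zcoi S_i$, and since the outsets of distinct sources of $Y$ are disjoint, $P\zin S_i=P_i$. The general position assumption forces the $k+1$ $Y$-trajectories $\zg_p$, $p\zpe P_i$, to be pairwise distinct. Because $\zt(p)=0$, each such $p$ is a singularity of $Z=\zt Y$, and $\zg_p$ splits at $p$ into an upstream arc---still a $Z$-trajectory with $\za$-limit $i$, hence contained in $R_i$---and a downstream half-trajectory $\zg_p^{+}$ whose $Z$-$\za$-limit equals $p$; therefore $\zg_p^{+}\zcoi S_i-R_i$. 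For the reverse inclusion, pick $q\zpe S_i-R_i$: reversing the $Y$-trajectory of $q$ brings it to $i$, so the reversed $Z$-trajectory must be halted by some singularity of $Z$ lying in $S_i$ and distinct from $i$; as the only critical point of $\zm$ in $S_i$ is $i$ itself, this singularity belongs to $P\zin S_i=P_i$, whence $q\zpe\zg_p^{+}$ for some $p\zpe P_i$. Thus
$$S_i-R_i=\zung_{p\zpe P_i}\zg_p^{+},$$
a disjoint union of $k+1$ half-trajectories.

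Local finiteness of $\{R_i\}$ is then automatic from $R_i\zcoi S_i$ and Lemma~\ref{lemD}, since every neighborhood meeting $R_i$ also meets $S_i$. For density, fix a non-empty open $U\zcoi B$: local finiteness allows only finitely many $S_{i_1},\dots,S_{i_\zlma}$ to meet $U$, and Lemma~\ref{lemD} makes $U\zin\zung_{j=1}^\zlma S_{i_j}$ dense in $U$. Its defect $\zung_{j=1}^\zlma(S_{i_j}-R_{i_j})$ is a finite union of gradient half-trajectories whose closures are $1$-dimensional arcs in $B$, hence nowhere dense since $k\zmai 2$. Removing a nowhere-dense set from a set dense in $U$ leaves a set dense in $U$, so $U\zin\zung_{j=1}^\zlma R_{i_j}$ is non-empty, proving that $\zung_{i\zpe I}R_i$ is dense in $B$.

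The main obstacle is the reverse inclusion $S_i-R_i\zcoi\zung_{p\zpe P_i}\zg_p^{+}$: one must rule out that a reversed $Z$-trajectory inside $S_i$ failing to reach $i$ is stopped at a singularity of $Z$ outside $P_i$. This is secured by combining (a) the absence of critical points of $\zm$ in $S_i$ other than $i$, (b) the disjointness of outsets of distinct sources, and (c) the placements $P_j\zcoi S_j$ for all $j\zpe I$; without these the $\za$-limit analysis of backward $Z$-trajectories would require a finer argument.
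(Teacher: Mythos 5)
Your proposal is correct and follows essentially the same route as the paper, whose entire justification is the one-line observation that $S_i-R_i$ is the union of $k+1$ half-trajectories of $Y$, so that Lemma \ref{lemE} follows from Lemma \ref{lemD}. You have merely (and accurately) filled in the details: the identification of $S_i-R_i$ with the downstream half-trajectories issuing from $P_i$, local finiteness via $R_i\zco S_i$, and density via the nowhere-density of finitely many arcs when $k\zmai 2$.
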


On the principal fibre bundle $\zp\colon M\zfl B$ consider a connection
$\mathcal C$ which is a product around every fibre $\zp^{-1}(p)$,
$p\zpe C$ (that is there exist an open set $p\zpe A\zco B$ and a fiber bundle
isomorphism between  $\zp\colon \zp^{-1}(A)\zfl A$
and $\zp_1 \colon A\zpor\mathbb T^n \zfl A$ in such a way that
$\mathcal C$, regarded on $\zp_1 \colon A\zpor\mathbb T^n \zfl A$,
is given by $\mathcal C (q,\zh)=T_q A\zpor \{0\}\zco
T_{(q,\zh)}(A\zpor\mathbb T^n )$).
This kind of connection always exists because $\{ A_{p}\}_{p\zpe C}$ is
 locally finite.

Let $Y'$ denote the lift of $Y$ to $M$ by means of $\mathcal C$; that
is $Y'(u)\zpe\mathcal C (u)$ and $\zp_{*}(Y'(u))=Y(\zp(u))$ for every
$u\zpe M$. By construction $Y'$ is $\mathbb T^n$-invariant and
 $Y'(u)=0$ if and only if $Y(\zp(u))=0$.

Let $T$ be a dense affine vector field on $\mathbb T^n$ and $T'$ the
fundamental vector field, on $M$, associated to $T$ through the action. As
describing vector field we take $X'=(\zt\zci\zp)(Y'+T')$, which clearly is
$\mathbb T^n$-invariant and complete.

The remainder of this section is devoted to show that $X'$ is a
describing vector field. First we study the behavior of $X'$ near some
fibres. If $p$ is a source of $Y$ there exist coordinates $(x_1 ,\dots,
x_k )$, about $p\zpe B$, with $p\zeq 0$ and
$Y=a\zsu_{j=1}^{k}x_{j}\zpar/\zpar x_{j}$, $a>0$. As around
$\zp^{-1}(p)$ the connection is a product, these coordinates can be
prolonged to a system of coordinates $(x,\zh)$ on a product open set
$A\zpor\mathbb T^n$, with the obvious identifications, while
$\mathcal C$ is given by the first factor. In this case
$$Y'+T'=a\zsu_{j=1}^{k}x_{j}{\frac {\zpar} {\zpar x_j}}+T$$
since $T'$ is just $T$ regarded as a vector field on
$A\zpor\mathbb T^n$.

The same happens when $p$ is a sink but $a<0$. If $p$ is a saddle then
the model of the first part is
$\zsu_{j=1}^{k}\zl_{j}x_{j}\zpar/\zpar x_{j}$ with some $\zl_j$ positive
and the others negative.

Thus, when $p\zpe C$, the torus $\zp^{-1}(p)$  is the adherence of a
trajectory of $X'$, this vector field never vanishes on $\zp^{-1}(p)$ and,
besides:
\begin{enumerate}[label={\rm (\alph{*})}]
\item If $p\zpe I$, then $\zp^{-1}(p)$ is the $\za$-limit of some external
trajectories but never the $\zw$-limit.
\item If $p$ is a sink, then $\zp^{-1}(p)$ is the $\zw$-limit of some external
trajectories but never the $\za$-limit.
\item If $p$ is a saddle, then $\zp^{-1}(p)$ is the $\za$-limit of some external
trajectories and the $\zw$-limit of other ones.
\end{enumerate}

On the other hand $(X')^{-1}(0)=\zp^{-1}(P)$. Moreover if $p\zpe P$
then $X'$ has order $2n_p$ at each point of $\zp^ {-1}(p)$.

If $X'$ is multiplied by a positive, bounded and $\mathbb T^n$-invariant
function $\zr\colon M\zfl\mathbb R$, then $\zr=\tilde\zr\zci\zp$ for some
positive and bounded function $\tilde\zr\colon B\zfl\mathbb R$. Thus
$\zr X'= ((\tilde\zr \zt)\zci\zp)(Y'+T')$. As $\zt$ and $\tilde\zr \zt$ have
 the same  essential properties, the foregoing description still holds for
$\zr X'$. In other words, this description is geometric and independent
of how trajectories are parameterized.

Consider $i\zpe I$ and identify its $Y$-outset $S_i$ to $\mathbb R^k$ in
such a way that $i\zeq 0$ and
$Y=a\zsu_{\zlma=1}^{k}x_{\zlma}\zpar/\zpar x_{\zlma}$, $a>0$. As $S_i$
is contractible the fibre bundle $\zp\colon\zp^{-1}(S_i )\zfl S_i$ is trivial;
so it can be regarded like
$\zp_1 \colon\mathbb R^k \zpor\mathbb T^n\zfl\mathbb R^k$ while
$$Y'+T'=a\zsu_{\zlma=1}^{k}x_{\zlma}{\frac {\zpar} {\zpar x_{\zlma}}}
+\zsu_{r=1}^{n}g_{r}(x){\frac {\zpar} {\zpar \zh_r}}.$$

Finally Lemma \ref{lemC} allows us to suppose
$$Y'+T'=a\zsu_{\zlma=1}^{k}x_{\zlma}{\frac {\zpar} {\zpar x_{\zlma}}}
+\zsu_{r=1}^{n}a_{r}{\frac {\zpar} {\zpar \zh_r}}$$
with $a>0$ and $a_1 ,\dots,a_n \zpe\mathbb R$. Moreover since
$T'=\zsu_{r=1}^{n}a_{r}\zpar/\zpar \zh_{r}$ at any point of
$\{0\}\zpor\mathbb T^n$, scalars $a_1 ,\dots,a_n$ are rationally
independent (recall that $Y'(u)=0$ whenever $Y(\zp(u))=0$).

Now is clear that, for every $p\zpe P_i$ and $u\zpe \zp^{-1}(p)$, there
exists a trajectory of $X'=(\zt\zci\zp)(Y'+T')$ whose $\za$-limit and $\zw$-limit are
$\zp^{-1}(i)$ and  $u$ respectively. As $n_p \znoi n_{p'}$ when
$p\znoi p'$, the existence of this kind of trajectories shows that any
automorphism of $X'$ has to send $\zp^{-1}(i)$ in itself and, consequently,
the $X'$-outset of $\zp^{-1}(i)$ in itself too (here outset means the set
of points whose trajectory has its $\za$-limit included in $\zp^{-1}(i)$).

The next question is to determine this outset. First we identify the outset
$R_i$, of $i$ with respect to $Z=\zt Y$, to $\mathbb R^k$ in such a way
that $i\zeq 0$ and $Z=b\zsu_{\zlma=1}^{k}x_{\zlma}\zpar/\zpar
x_{\zlma}$, $b>0$. Again  $\zp\colon\zp^{-1}(R_i )\zfl R_i$ is trivial and
reasoning as before, taking into a account that $X'$ is complete on
$\zp^{-1}(R_i )$ which allows to apply Lemma \ref{lemC}, leads us to the
case $\zp^{-1}(R_i )\zeq\mathbb R^k \zpor \mathbb T^n
\zfl\mathbb R^k \zeq R_i$ and
$$X'=b\zsu_{\zlma=1}^{k}x_{\zlma}{\frac {\zpar} {\zpar x_{\zlma}}}
+\zsu_{r=1}^{n}b_{r}{\frac {\zpar} {\zpar \zh_r}}$$
where $b>0$ and $b_1 ,\dots,b_n \zpe \mathbb R$ are rationally
independent. Therefore the $X'$-outset of $\zp^{-1}(i)$ equals
$\zp^{-1}(R_ i)$.

Let $F\colon M\zfl M$ be an automorphism of $X'$. Then
$F\colon\zp^{-1}(R_ i)\zfl \zp^{-1}(R_ i)$ is a diffeomorphism. As the
trajectories of $\zsu_{r=1}^{n}b_{r}\zpar/\zpar \zh_{r}$ are dense, from
Corollary \ref{cor_de_2.2} it follows that
$F(x,\zh)=(\zf(x),\zh+\zl)$, where $\zl\zpe \mathbb T^n$ and
$\zf\colon\mathbb R^k\zfl\mathbb R^k$ is an isomorphism.

For any $p\zpe P_i$ some trajectories of $X'$ with $\za$-limit
$\zp^{-1}( i)$ have, as $\zw$-limit, a point of $\zp^{-1}(p)$, that is a
singularity of order $2n_p$. Since $n_p \znoi n_{p'}$ when $p\znoi p'$,
the set of these trajectories has to be an invariant of $F$.
Regarded on $R_i \zco B$, and taking into account that $X'$ projects in
$Z$, this fact implies that $\zf$ has to map the trajectory of
$Z=b\zsu_{\zlma=1}^{k}x_{\zlma}\zpar/\zpar x_{\zlma}$
of $\za$-limit $i$ and $\zw$-limit $p$ into itself. Thus the direction vector of this
curve is an eigenvector of $\zf$ with positive eigenvalue. But there are
$k+1$ eigenvector (as many as points in $P_i$) and they are in general
position, so $\zf$ is a positive multiple of identity.

Therefore there exists $t_i$ such that
$\zF_{t_i}(x,\zh)=(\zf(x),\zh+\tilde\zl_i)$ where $\zF_{t}$ denotes the
flow of $X'$ and $\tilde\zl_i \zpe\mathbb T^n$. Since $\zF_{t}$ and the
action of $\mathbb T^n$ commute, that shows the existence of
$t_i \zpe\mathbb R$ and $\zl_i \zpe\mathbb T^n$ such that
$F=\zl_i \zci\zF_{t_i}$ on $\zp^{-1}(R_i )$.

It easily seen that the family $\{\zp^{-1}( R_{i})\}_{i\zpe I}$ is locally
finite and the set $\zung_{i\zpe I}\zp^{-1}( R_{i})$ is dense in $M$.

On each $\zp^{-1}( R_{i})$ the action of $\mathbb T^n$ and $F$ commute,
so they do on $M$. Thus $F$ induces a diffeomorphism $f\colon B\zfl B$
such that $f\zci\zp=\zp\zci F$. Besides, since $Z$ is the projection of $X'$,
our $f$ is an automorphism of $Z$. Now from the expression of
$F\colon\zp^{-1}( R_{i})\zfl\zp^{-1}( R_{i})$ it follows that $f=\zf_{t_i}$ on
$R_i$, $i\zpe I$, where $\zf_t$ is the flow of $Z$.

\begin{lemma}\label{lemF}
All $t_i$'s are equal and $f=\zf_t$ for some $t\zpe\mathbb R$.
\end{lemma}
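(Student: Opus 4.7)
The plan is to show that the locally constant function $i\mapsto t_i$ is in fact globally constant, via a connectedness argument on $B$. First, since $f$ and $\zf_{t_i}$ are continuous and agree on the open set $R_i$, they agree on its closure $\overline{R_i}$. The key observation is then that if $\overline{R_i}\zin\overline{R_j}$ contains a point $p$ which is \emph{not} a critical point of $Z$, the identity $\zf_{t_i}(p)=f(p)=\zf_{t_j}(p)$ forces $\zf_{t_i-t_j}(p)=p$; since $Z=\zt Y$ with $Y$ the gradient of $\zm$, its regular orbits are strictly monotone for $\zm$ and hence never periodic, so necessarily $t_i=t_j$.

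Next, I group indices by their $t$-value: for the equivalence relation $i\sim j\zsii t_i=t_j$ on $I$ and each class $c$, set $V_c=\zung_{i\zpe c}R_i$. Local finiteness of $\{R_i\}_{i\zpe I}$ (Lemma \ref{lemE}) gives $\overline{V_c}=\zung_{i\zpe c}\overline{R_i}$, and since $\zung_{i\zpe I}R_i$ is dense in $B$ the closures $\overline{V_c}$ cover $B$. By the previous step, for distinct classes $c\znoi c'$ the intersection $\overline{V_c}\zin\overline{V_{c'}}$ is contained in the critical set $K$ of $Z$, which consists of the isolated critical points of $\zm$ together with the locally finite set $P$ of artificial singularities; in particular $K$ is a discrete, hence closed, subset of $B$.

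Since $\dim B=k\zmai 2$, removing the discrete set $K$ leaves $B-K$ connected. The sets $\overline{V_c}-K$ are closed in $B-K$, pairwise disjoint, and cover $B-K$; each is non-empty because, for $i\zpe c$, the open set $R_i-\{i\}$ lies in $\overline{V_c}-K$ (as $R_i$'s only critical point is $i$ itself). Connectedness therefore forces a unique class, yielding a common value $t\zpe\mathbb R$ with $t_i=t$ for every $i\zpe I$. Finally, $f$ and $\zf_t$ are continuous and coincide on the dense set $\zung_{i\zpe I}R_i$, whence $f=\zf_t$ on all of $B$.

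The step I expect to require the most care is the second paragraph --- verifying that intersections of closures of basins in different classes lie in $K$. This reduces to ruling out non-trivial recurrence of $Z$ off $K$, which comes directly from the gradient origin of $Y$. The dimensional assumption $k\zmai 2$ is used only at the final connectedness step, where it is essential.
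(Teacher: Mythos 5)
Your argument is correct and is, in self-contained form, exactly the argument the paper delegates to the proof of Lemma~3.7 of \cite{TV}: non-recurrence of the gradient-like flow $Z=\zt Y$ away from its (discrete) singular set forces $t_i=t_j$ whenever $\overline{R_i}\zin\overline{R_j}$ meets a regular point, and connectedness of $B$ minus a discrete set (this is where $k\zmai 2$ enters) then collapses everything to a single class. The only point worth making explicit is that the local finiteness of $\{R_i\}_{i\zpe I}$ also guarantees that the union of the closures $\overline{V_{c'}}$ over all classes $c'\znoi c$ is closed, so that each $\overline{V_c}-K$ is genuinely clopen in $B-K$.
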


\begin{proof} Notice that $X$ has no regular periodic trajectories
(here dimension of $B$ $\zmai 2$ is needed). Then, the lemma follows from
the proof of Lemma 3.7 in \cite{TV}, when $G$ is the trivial group and $X$ becomes $Z$.
\end{proof}

Therefore composing $F$ with $\zF_{-t}$ allows to suppose that $f$ is
the identity and $F=\zl_i$ on each $\zp^{-1}( R_{i})$, $i\zpe I$.

Consider a $\mathbb T^n$-invariant Riemannian metric on $M$. Then
$F$ is an isometry on every $\zp^{-1}( R_{i})$ so, by continuity, on $M$.
Take $i_0 \zpe I$; then the isometries $F$ and $\zl_{i_0}$ agree on
$\zp^{-1}( R_{i_0})$. But on connected manifolds, isometries are
determined by their $1$-jet at any point. Therefore $F=\zl_{i_0}$
on $M$. In other words
$(\zl,t)\zpe \mathbb T^n\zpor{\mathbb R}\zfl \zl\zci\zF_t \zpe\Aut(X)$
is an epimorphism.

We now prove injectivity. Assume $\zl\zci\zF_t =Id$, that is
$\zF_t =(-\zl)$. Then $\zf_t\colon B\zfl B$ equals the identity because
$(-\zl)$ induces this map on $B$.
Since $Z$ has no periodic regular trajectories this implies $t=0$ and,
finally, $\zl=0$ because the action of $\mathbb T^n$ is effective.
{\it In short $X'$ is a describing vector field for free actions.}

\begin{remark}\label{remC}
{\rm
Note that if $\zr\colon M\zfl\mathbb R$ is a $\mathbb T^n$-invariant,
positive and bounded function, then $\zr X'$ is a describing vector field too.
Indeed, there exists a function $\tilde\zr\colon B\zfl\mathbb R$ such that
$\zr=\tilde\zr\zci\zp$ and it suffices reasoning with $\tilde\zr\zt$
instead of $\zt$.}
\end{remark}

\section{Effective actions}\label{secC}
Throughout this section we assume that $m-n\zmai 2$ and the
 $\mathbb T^n$-action is effective. Our next goal is to construct a describing vector
field on $M$. Let $S$ be the set of those
 points of $M$ whose isotropy (stabilizer)
is non trivial. By Proposition \ref{proAA}
 the set $M-S$ is dense, open, connected and $\mathbb T^n$-invariant. Moreover the
$\mathbb T^n$-action on $M-S$ is free, so we can consider on this set a
describing vector field $X'$ as in Section \ref{secB}. Let $\zf$ be a function
like in Proposition~\ref{proAB} for $X'$ and $M-S$, and
$\widehat X'$ be the vector field on $M$ given by $\zf X'$ on $M-S$ and
zero on $S$.

Set $\zq=h\zci\zf$ where $h\colon\mathbb R\zfl\mathbb R$ is defined
by $h(t)=0$ if $t\zmei 0$ and $h(t)=exp(-1/t)$ if $t>0$. The function
$\zq$, which is $\mathbb T^n$-invariant, vanishes at order infinity at
every point of $S$, i.e.\ all its $r$-jets vanish. Besides, it is bounded on
$M$ and positive on $M-S$.

Now put $\widetilde X=\zq\widehat X'$. Clearly $\widetilde X$
vanishes at order
infinity at  $u$ if and only if $u\zpe S$; therefore $S$ is an invariant of
$\widetilde X$. Moreover $\widetilde X$ is complete on $M$ and $M-S$
respectively. By Remark~\ref{remC} our $\widetilde X$ is a describing
vector field on $M-S$ since $\widetilde X=(\zq\zf)X'$ on this set.

If $F\colon M\zfl M$ is an automorphism of $\widetilde X$ then $F(S)=S$,
and  $F\colon M-S\zfl M-S$ is an automorphism of $\widetilde X$; so on
$M-S$ one has $F=\zl\zci\widetilde\zF_t$, where $\widetilde\zF_t$ is
the flow of $\widetilde X$. By continuity $F=\zl\zci\widetilde\zF_t$
everywhere, which implies that $\widetilde X$ is a describing vector field
on $M$ (the homomorphism injectivity is inherited from $M-S$).

\section{The case $m-n=1$}\label{secD}
First assume the action is free, which gives rise to a principal fibre bundle
$\zp\colon M\zfl B$ with $B$ connected and of dimension one. Therefore $B$
is $\mathbb R$ or $S^1$ and $\zp\colon M\zfl B$ can be identified to
$\zp_1 \colon B\zpor\mathbb T^n \zfl B$. One will need the following result
whose proof is routine.

\begin{lemma}\label{lemG}
On a open set $0\zpe A\zco\mathbb R$ consider a vector field $X$ such that
its $(r-1)$-jet at origin vanishes but its $r$-jet does not, $r\zmai 1$.
Let $\zf_t$ be the flow of $X$. Given $f\colon A\zfl\mathbb R$ and
$t_1 ,t_2 \zpe\mathbb R$, if $f=\zf_{t_1}$ on $A\zin (0,\zinf)$ and
$f=\zf_{t_2}$ on $A\zin (-\zinf,0)$ then $t_1 =t_2$.
\end{lemma}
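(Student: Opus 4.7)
The plan is to reduce the lemma to showing that the $r$-jet of $\zf_t$ at the origin depends injectively on $t$, after which the conclusion follows from the $C^{\zinf}$ regularity of $f$ that is built into the paper's conventions.

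Writing $X = v(x)\,\zpar/\zpar x$, the hypothesis says $v(x) = a x^r + O(x^{r+1})$ with $a = v^{(r)}(0)/r!\znoi 0$, so $0$ is a fixed point of $\zf_t$. I would compute the Taylor expansion of $\zf_t(x)$ at $0$ by differentiating the ODE $\zpar_t \zf_t(x) = v(\zf_t(x))$ in $x$ and evaluating at $x=0$. Because $v^{(j)}(0) = 0$ for $j<r$, a short Fa{\`a} di Bruno induction gives $\zpar_x^k \zf_t(0) = \zd_{k,1}$ for $0\zmei k < r$ whenever $r\zmai 2$. Plugging back in, the coefficient of $x^r$ in $v(\zf_t(x))$ collapses to the constant $a$, and integration in $t$ yields
$$\zf_t(x) = x + a t\, x^r + O(x^{r+1}), \qquad r\zmai 2,$$
whereas for $r=1$ the linear ODE directly produces $\zf_t(x) = e^{at}x + O(x^2)$. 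In either case, the $r$-jet of $\zf_t$ at $0$ depends injectively on $t$.

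The remaining step is bookkeeping: under the paper's $C^{\zinf}$ convention $f$ is smooth, hence $C^r$, so its $r$-jet at $0$ is well defined and equals the right- and left-hand $r$-jets coming from $\zf_{t_1}$ and $\zf_{t_2}$ respectively. The two one-sided jets must therefore agree, and by the injectivity above we conclude $t_1 = t_2$.

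The only genuine work is the Fa{\`a} di Bruno computation in the middle paragraph; no idea beyond the vanishing of $v^{(j)}(0)$ for $j<r$ is required, which matches the author's designation of the proof as routine.
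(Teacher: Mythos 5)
Your proof is correct. The paper offers no argument for this lemma (it is simply declared routine), so there is nothing to compare against; your computation of the $r$-jet of $\zf_t$ at the fixed origin --- $e^{at}x+O(x^2)$ for $r=1$ and $x+atx^{r}+O(x^{r+1})$ for $r\zmai 2$, hence injective in $t$ since $a\znoi 0$ --- combined with matching the one-sided $r$-jets of the smooth $f$ at $0$, is exactly the kind of verification the authors intend and closes the gap.
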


Set $Y=q(q^2 +1)^{-1}\zpar/\zpar x$, where
$q=x(x-1)(x-2)(x-3)(x-4)$ when $B=\mathbb R$, and
$Y=\sin(3\za)\zpar/\zpar\za$ if $B=S^1$ endowed with the angular
coordinate $\za$. Clearly $Y$ is complete. In the first case the sources are
$0,2,4$ and the sinks $1,3$, and in the second one $0,2\zp/3,4\zp/3$
and $\zp/3,\zp,5\zp/3$ respectively.

When $\dim B\zmai 2$ we have created new singularities called artificial. Now
instead of that one will increase the order of sinks (otherwise the non-singular
set has too many components). Let $\zt\colon B\zfl \mathbb R$ be  a bounded
function which is positive outside sinks and has order two at $1$ and
$\zp/3$, order four at $3$ and $\zp$ and, finally, order six at
$5\zp/3$. Set $Z=\zt Y$, which is a complete vector field.

This time the $Z$-outsets $\{R_i \}_{i\zpe I}$, where $I=\{0,2,4\}$ if
$B=\mathbb R$ or $I=\{0,2\zp/3,4\zp/3\}$ if $B=S^1$, equal those
of $Y$, and any of them is an invariant of $Z$ because the $\zw$-limits
of its trajectories have different orders or are empty; even more,
every side of the outset has to be preserved.

On $M=B\zpor\mathbb T^n$ with coordinates $(x,\zh)$ or $(\za,\zh)$ set
$X'=\zt(Y+T)$, where $T$ is a dense affine vector field and $\zt, Y, T$
are regarded on $M=B\zpor\mathbb T^n$ in the natural way.
Now $(X')^{-1}(0)=(B-\zung_{i\zpe I}R_i )\zpor\mathbb T^n$. Therefore
$(X')^{-1}(0)$ is the union of two ($B=\mathbb R$) or three ($B=S^1$)
fibres; moreover points of different fibres have different orders.

Let $F\colon M\zfl M$ be an automorphism of $X'$; reasoning as in the case
$\dim B\zmai 2$ shows that $F=\zl_i \zci \zF_{t_i}$ on
$R_i \zpor\mathbb T^n$ for each $i\zpe I$, where $\zF_t$ is the flow of
$X'$. Thus the induced automorphism $f\colon B\zfl B$ of $Z$ equals
$\zf_{t_i}$ on $R_i$, where $\zf_t$ is the flow of $Z$. Now Lemma
\ref{lemG} implies that $t_i =t_j$ if $R_i$ and $R_j$ are contiguous. In short
$f=\zf_t$ for some $t\zpe\mathbb R$. The remainder of the proof is
similar to that of $\dim B\zmai 2$.

Finally notice that $\zr X'$ is a describing vector field too if
$\zr\colon M\zfl\mathbb R$ is $\mathbb T^n$-invariant, positive and
bounded; therefore the result can be extended to the case of an effective ç
action following the lines in Section \ref{secC}.

\section{Examples}\label{secE}

One starts this section by giving two examples of effective toric actions. The
first one is a general construction on the Lie groups. In the second example
a describing vector field is constructed for the usual action of $\mathbb T^3$
on $S^3$.

On the other hand two examples more show that the main theorem fails for
general compact Lie groups. More exactly for effective actions of
$SO(3)$ (Example \ref{ejeB}) and for effective actions of a non-connected
compact group, of dimension two, with abelian Lie algebra (Example \ref{ejeC}).

\begin{example}\label{ejeAcero}
{\rm Let $G$ be a connected Lie group, with center $ZG$, and
two (non necessary maximal) tori $H,\widetilde H\leq G$. Then there exists a
$(H\zpor\widetilde H)$-action on $G$ given by
$(h,\tilde h )\zpu g= hg{\tilde h}^{-1}$, whose kernel $K$ equals
$\{(h,h)\zbv h\zpe H\zin\widetilde H\zin ZG\}$. Thus an effective action
 of the torus $(H\zpor\widetilde H)/K$ on $G$ is induced.

Now suppose that $G$ is compact with rank $r$ and $ZG$ finite,
that is the center of the Lie algebra of $G$ is zero. Let $H$ be a
maximal torus of $G$; set  $\widetilde H=H$. Then one obtains
an effective action of  $\mathbb T^{2r}\zeq (H\zpor H)/K$ on $G$.
 Moreover the isotropy group of any
$g\zpe G$ has two or more elements if and only if
$(gHg^{-1})\zin H$ is not included in $ZG$; by Proposition \ref{proAA}
 this happens for almost no $g\zpe G$.}
\end{example}

\begin{example}\label{ejeA}
{\rm On $S^5 =\{y\zpe \mathbb R^6 \zbv y_{1}^2 +\dots+y_{6}^2 =1\}$
consider the action of $\mathbb T^3$ given by the fundamental vector fields
$U_j =-y_{2j}\zpar/\zpar y_{2j-1}+y_{2j-1}\zpar/\zpar y_{2j}$,
$j=1,2,3$. In order to construct a describing vector field for this action we follow along the lines of
Sections \ref{secB} and \ref{secC} up to some minor changes.
First observe that the singular set for this action is
$S=\{y\zpe S^5 \zbv (y_{1}^2 +y_{2}^2)(y_{3}^2 +y_{4}^2)
(y_{5}^2 +y_{6}^2)=0\}$, so the action of $\mathbb T^3$ on $S^5 -S$
is free.

Let $\zp\colon S^5 \zfl\mathbb R^2$ be the map given by
$\zp(y)=(y_{1}^2 +y_{2}^2 ,y_{3}^2 +y_{4}^2 )$, and $B$ be the interior of the
triangle of vertices $(0,0),(1,0),(0,1)$. Then
$\zp\colon S^5 -S\zfl\mathbb B$ is the $\mathbb T^3$-principal bundle associated
to the action of $\mathbb T^3$. A connection $\mathcal C$ for this principal bundle
is defined by $Ker(\za_1 \zex\za_2 \zex\za_3 )$ where each
$\za_j =(y_{2j-1}^2 +y_{2j}^2 )^{-1}(-y_{2j}dy_{2j-1}+y_{2j-1}dy_{2j})$,
which is flat since $d\za_1 =d\za_2 =d\za_3 =0$.

The vector fields
$$V_r =(y_{5}^2 +y_{6}^2 )\left(  y_{2r-1}{\frac {\zpar} {\zpar y_{2r-1}}}
+y_{2r}{\frac {\zpar} {\zpar y_{2r}}}\right)
-(y_{2r-1}^2 +y_{2r}^2 )\left(  y_{5}{\frac {\zpar} {\zpar y_{5}}}
+y_{6}{\frac {\zpar} {\zpar y_{6}}}\right)\, ,$$
$r=1,2$, are tangent to $\mathcal C$ and project in
$$2(1-x_{1}-x_{2}){\frac {\zpar} {\zpar x_{r}}}\, ,$$
$r=1,2$, on $B$ endowed with coordinates $x=(x_1 ,x_2)$.

Set
$$Y=2(1-x_{1}-x_{2})x_1 x_2 \left[ \left( x_1 -{\frac {1}  {4}}\right)
{\frac {\zpar} {\zpar x_{1}}}+ \left( x_2 -{\frac {1}  {4}}\right)
{\frac {\zpar} {\zpar x_{2}}}\right]\, ,$$
whose lifted vector field through $\mathcal C$ is

$$Y'= (y_{1}^2 +y_{2}^2)(y_{3}^2 +y_{4}^2)
\left(( y_{1}^2 +y_{2}^2 -1/4)V_1 +
( y_{3}^2 +y_{4}^2 -1/4)V_2\right).$$

Note that $Y'$ extends in a natural way to $S^5$. Moreover
$Y'$ vanishes on $S$.

Clearly $Y$ on $B$ and $Y'$ on $S^5$ and $S^5 -S$ are complete.
Besides $(1/4,1/4)$ is the only source of $Y$.

Let $\zt\colon\mathbb R^2 \zfl\mathbb R$ be the function defined by
$$\zt(x)=\zr(x)\left((x_1 -1/8)^{2}+(x_2 -1/8)^{2}\right)
\left((x_1 -1/8)^{2}+(x_2 -1/4)^{2}\right)^2
\left((x_1 -1/4)^{2}+(x_2 -1/8)^{2}\right)^3$$
where $\zr(x)=x_{1}^{10}x_{2}^{10}(1-x_{1}-x_{2})^{10}$, whose
zeros on in $B$ are $(1/8,1/8)$ with order two,  $(1/8,1/4)$ with order four
and  $(1/4,1/8)$ with order six.

Now $X'=(\zt\zci\zp)(Y'+U_1 +e U_2 +e^2 U_3)$, defined on $S^5$, is
a describing vector field for the action of $\mathbb T^3$.

Indeed, the only difference with respect to the construction of Sections
\ref{secB} and \ref{secC} is that every point of $S$ is a singularity of
$X'$ with order $\zmai 10$ instead of infinity, but it is not important because
the order of the remainder singularities of $X'$ is always $\zmei 6$.}
\end{example}

\begin{example}\label{ejeB}
{\rm Let $H$ be a closed subgroup of a connected Lie group $G$ and
$G/H$ be the (quotient) homogeneous space associated to the equivalence
relation $g_1 \mathcal R g_2$ if and only of $g_2 =g_1 h$ for some
$h\zpe H$. As it is well known $G$ acts on $G/H$ by setting
$g\zpu\overline{ g'}=\overline{gg'}$.

Now assume $H$ discrete; then the canonical projection
$\zp\colon G\zfl G/H$ is a covering. Moreover a vector field $V$ on $G/H$
commutes with the action of $G$, that is to say with every fundamental
vector field, if and only if its lifted vector field $V'$, on $G$, is at the same
time left $G$-invariant and right $H$-invariant.

If one suppose that $V'$ is left $G$-invariant this property is equivalent
to say that $V'(e)$, where $e$ is the identity of $G$, is invariant by
(the adjoint action of) $H$. Therefore if no element of $T_e G-\{0\}$
is invariant by $H$, then any vector field on $G/H$ which commutes with
the action of $G$ identically vanishes.

Set $G=SO(3)$ and let $H$ be the subgroup of order four consisting of the
identity plus the three rotations of angle $\zp$ around any of the coordinates
axes (i.e.\ $H$ is the Klein four-group). Then no element of  $T_e SO(3)-\{0\}$ is invariant by $H$.

Consider on $M=\mathbb R^k \zpor(SO(3)/H)$, $k\zmai 1$, the action of
$SO(3)$ given by $g\zpu(x,\overline{g'})=(x,\overline{gg'})$. {\it This action is
effective but it does not have any describing vector field.} Indeed, take a vector
field $U$ on $M$ which commutes with the action of $SO(3)$. Then $U$ has
to respects the foliation defined by the orbits of $SO(3)$, so
$U=\zsu_{j=1}^k f_j (x)\zpar/\zpar x_j +V$ where $V$ is a vector field
tangent to the second factor and $x=(x_1 ,\dots,x_k )$ the canonical
coordinates in $\mathbb R^k$.

Since $U$ and $\zsu_{j=1}^k f_j (x)\zpar/\zpar x_j$ commute with the
action, $V$ has to commute with the fundamental vector
fields on each orbit, hence $V=0$. In other words $U=\zsu_{j=1}^k f_j (x)\zpar/\zpar x_j$.

On the other hand if $\zf\colon SO(3)/H\zfl SO(3)/H$ is a diffeomorphism, then
$$F\colon\mathbb R^{k}\zpor(SO(3)/H)\zfl\mathbb R^k \zpor(SO(3)/H)$$ given
by $F(x,\overline{g})=(x,\zf(\overline{g}))$ is an automorphism of $U$, so
$\Aut(U)$ is strictly greater than $SO(3)\zpor\mathbb R$.

Another possibility is to consider the action of $SO(3)$ on the sphere $S^2$.
Then for each $p\zpe S^2$ there exists a fundamental vector field $X$ such
 that $p$ is an isolated singularity of $X$. Therefore if $V$ commutes with $X$
then $V(p)=0$; consequently if $V$ commutes with the action of $SO(3)$ on
$S^2$ necessarily $V=0$.

By the same reason as before, the action of $SO(3)$ on
$\mathbb R^k \zpor S^2$, $k\zmai 1$, defined by $g\zpu(x,p)=(x,g\zpu p)$
has no  describing vector field.}
\end{example}

\begin{example}\label{ejeC}
{\rm Let $G$ be the group of affine transformations $\zf\colon\mathbb T^2
\zfl\mathbb T^2$ defined by $\zf(\zh)=a\zh+\zl$ where
$a=\zmm 1$ and $\zl\zpe\mathbb T^2$.
The fundamental vector fields of the natural action
of $G$ on $\mathbb T^2$ are $b_1 \zpar/\zpar\zh_1
+b_2 \zpar/\zpar\zh_2$, $b_1 ,b_2 \zpe\mathbb R$.
If $V$ is a vector field on $\mathbb T^2$ which commutes with the action of $G$
it has to commute with the fundamental vector fields, therefore
$V=c_1 \zpar/\zpar\zh_1  +c_2 \zpar/\zpar\zh_2$, $c_1 ,c_2 \zpe\mathbb R$.

But, at the same time, $V$ commutes with $\tilde\zf(\zh)=-\zh$
since $\tilde\zf\zpe G$, which implies $V=0$.
Now reasoning as in Example \ref{ejeB} shows that the effective action of
$G$ on $\mathbb R^k \zpor\mathbb T^2$, $k\zmai 1$, defined by
$\zf\zpu(x,\zh)=(x,\zf(\zh))$ has no describing vector field.

Observe that $G$ is a compact Lie group with abelian Lie algebra, but
it is not connected.}
\end{example}

\section{Two auxiliary results}\label{secAA}

Here we include two complementary results which were
needed before. The first one is a straightforward consequence of the Principal Orbit Theorem
on the structure of the orbits of the action of a compact Lie group (see
\cite[Theorem IV.3.1]{bredon}).

\begin{proposition}\label{proAA}
Consider an effective action of $\mathbb T^n$ on a connected
$m$-manifold $M$. Let $S$ be the set of those points of $M$ whose
isotropy group has two or more elements, i.e.\ the isotropy group is non trivial. Then the set $M-S$ is connected,
dense, open and $\mathbb T^n$-invariant. Moreover $n\zmei m$.
\end{proposition}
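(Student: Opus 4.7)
The plan is to deduce the statement from the Principal Orbit Theorem \cite[Theorem IV.3.1]{bredon}, which guarantees the existence of a conjugacy class of subgroups $(H_{0})$ of $\mathbb{T}^{n}$ (the \emph{principal isotropy type}) such that the set $M_{(H_{0})}$ of points whose isotropy group is conjugate to $H_{0}$ is open and dense in $M$, that $M_{(H_{0})}/\mathbb{T}^{n}$ is connected, and that $H_{0}$ is subconjugate to the isotropy group of every point of $M$.

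First I would exploit that $\mathbb{T}^{n}$ is abelian: any two conjugate subgroups coincide, so ``conjugate to $H_{0}$'' just means ``equal to $H_{0}$''. Then I would argue that effectiveness forces $H_{0}=\{e\}$. Indeed, every $h\zpe H_{0}$ fixes every point of the dense set $M_{(H_{0})}$, hence by continuity every point of $M$; effectiveness then gives $h=e$. Therefore $M_{(H_{0})}=\{u\zpe M\zbv \mathbb{T}^{n}_{u}=\{e\}\}=M-S$, which is thus open and dense.

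Next I would check the remaining properties of $M-S$. Invariance under $\mathbb{T}^{n}$ is immediate: for $g\zpe \mathbb{T}^{n}$ and $u\zpe M-S$ one has $\mathbb{T}^{n}_{g\zpu u}=g\mathbb{T}^{n}_{u}g^{-1}=\{e\}$, so $g\zpu u\zpe M-S$. For connectedness I would use that $M-S=M_{(H_{0})}$ is a union of $\mathbb{T}^{n}$-orbits, each diffeomorphic to $\mathbb{T}^{n}/\{e\}=\mathbb{T}^{n}$ (hence connected), and that $M_{(H_{0})}/\mathbb{T}^{n}$ is connected by the Principal Orbit Theorem; a standard argument (saturated open sets pulled back from a connected base with connected fibres yield a connected total space) then gives connectedness of $M-S$.

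Finally, for the inequality $n\zmei m$, I would pick any $u\zpe M-S$. The orbit map $\mathbb{T}^{n}\zfl \mathbb{T}^{n}\zpu u$ is an immersion (since its differential at the identity has kernel equal to the Lie algebra of the isotropy, which here is trivial), so the orbit is an immersed submanifold of $M$ of dimension $n$, yielding $n\zmei m$. The only delicate point is confirming the connectedness of $M-S$ from that of $M_{(H_{0})}/\mathbb{T}^{n}$; everything else is a direct translation of the Principal Orbit Theorem to the abelian setting plus the effectiveness hypothesis.
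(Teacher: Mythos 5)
Your proof is correct and follows exactly the route the paper intends: the paper gives no detailed argument, stating only that the proposition ``is a straightforward consequence of the Principal Orbit Theorem'' \cite[Theorem IV.3.1]{bredon}, and your proposal is a faithful and complete expansion of that remark (abelianness collapses conjugacy classes, density plus effectiveness forces the principal isotropy to be trivial, and the saturated-open-sets argument correctly transfers connectedness from the orbit space). Nothing is missing.
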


The second result shows how, for connected compact Lie group actions,
locally defined invariant vector fields give rise to invariant
vector fields defined on the whole manifold.

\begin{proposition}\label{proAB}
Consider an action of a connected compact Lie group $G$ on a
$m$-manifold $M$. Given a vector field $X$ on an open set $A$ of $M$,
both of them $G$-invariant, then there exists a $G$-invariant
bounded function $\zf:M\zfl{\mathbb R}$,  which is positive on $A$ and
vanishes on $M-A$, such that the vector field ${\hat X}$ on $M$ defined
 by ${\hat X}=\zf X$ on $A$ and ${\hat X}=0$ on $M-A$ is differentiable
and $G$-invariant.
\end{proposition}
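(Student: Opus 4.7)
The plan is to first forget about $G$-invariance and construct an auxiliary smooth function $\psi_0\colon M\zfl[0,1]$ which is positive on $A$, vanishes together with all its derivatives at every point of $M-A$, and such that the vector field $\psi_0 X$ (extended by zero to $M-A$) is differentiable on $M$. Then, one averages $\psi_0$ over $G$ with respect to the Haar measure to get a $G$-invariant $\zf$ with the same properties.

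To build $\psi_0$, I would exhaust the open set $A$ by a nested sequence of compact subsets $K_1 \zco K_2 \zco \dots$ with $\zung_j K_j =A$ and $K_j$ contained in the interior of $K_{j+1}$. For each $j$ pick a bump function $\chi_j \colon M\zfl[0,1]$ with compact support in the interior of $K_{j+1}$ and $\chi_j \zeq 1$ on $K_j$. Since each $\chi_j$ vanishes identically on a neighbourhood of every point of $M-A$, and $\chi_j X$ has compact support in $A$ so extends smoothly by zero, we set
$$\psi_0 \;=\; \zsu_{j=1}^{\zinf} c_j\, \chi_j,\qquad c_j\;=\;\frac{1}{2^{j}\bigl(1+\|\chi_j \|_{C^j} +\|\chi_j X\|_{C^j}\bigr)},$$
so that $\psi_0$ and $\psi_0 X$ are sums of $C^\zinf$ objects whose derivatives of every order converge uniformly on compact sets. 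This gives $\psi_0 \zpe C^\zinf(M)$, $\psi_0 >0$ on $A$ (since any $x\zpe A$ lies in some $K_j$), $\psi_0$ flat on $M-A$ (all summands vanish in a neighbourhood of any point of $M-A$), and $\psi_0 X$ of class $C^\zinf$ on $M$. Call this last extended vector field $Y$.

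Next I would average: set $\zf(x)=\zil_G \psi_0 (g\zpu x)\,dg$ using the normalized Haar measure on $G$. Since $A$ and $M-A$ are $G$-invariant (because $X$ is), $\zf$ is $G$-invariant, bounded by $1$, positive on $A$ and flat on $M-A$. For smoothness of $\zf X$ extended by zero, the key observation is that since $X$ is $G$-invariant, for each $g\zpe G$ the pushforward $g_{*}Y$ satisfies $(g_{*}Y)(x)=\psi_0(g^{-1}x)X(x)$ for $x\zpe A$ and $(g_{*}Y)(x)=0$ for $x\zpe M-A$; hence
$$\zil_{G}(g_{*}Y)(x)\,dg\;=\;\zf(x)\,X(x)\ \text{on } A,\qquad \zil_{G}(g_{*}Y)(x)\,dg\;=\;0\ \text{on } M-A.$$
The left-hand side is the integral over the compact group $G$ of a family of smooth vector fields depending smoothly on $g$, so it defines a vector field $\widehat X$ of class $C^\zinf$ on $M$. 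Setting $\widehat X=\zf X$ on $A$ and $\widehat X=0$ on $M-A$ therefore gives a genuine smooth vector field, and $G$-invariance of $\widehat X$ follows from left invariance of the Haar measure.

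The only delicate step is the construction of $\psi_0$: the vector field $X$ may blow up arbitrarily fast as one approaches $\zpar A$, so the weights $c_j$ must be chosen to control simultaneously the $C^j$-norms of $\chi_j X$ on every compact subset of $A$ and to guarantee that $\psi_0$ is still strictly positive on $A$; the exhaustion plus the geometric weights above takes care of both. Once $\psi_0$ is in hand, the averaging argument, thanks to the $G$-invariance of $A$ and of $X$, transfers all of its properties to $\zf$ without any further work.
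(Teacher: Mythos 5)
Your argument is correct and follows essentially the same route as the paper: first produce a (not necessarily invariant) cutoff $\psi_0$ making $\psi_0 X$ extend smoothly by zero, then average over $G$ with the normalized Haar measure, using the $G$-invariance of $X$ and of $A$ to see that the averaged vector field is again of the form (invariant function)$\cdot X$ on $A$ and zero outside. The only difference is that the paper imports the non-equivariant cutoff from \cite[Proposition 5.5]{TV}, whereas you re-prove that step directly via an exhaustion of $A$ and a weighted sum of bump functions.
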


First let us recall some elementary facts on actions. Let $Z$ be a vector
field on $M$ and $\widetilde Z$ the vector field depending on a parameter
$g\zpe G$ given by $\widetilde Z(g,p)=(g_*)^{-1}(Z(g\zpu p))$. On $G$
consider a bi-invariant volume form whose integral equals $1$ and
 the measure associated to it, that is the Haar measure. Then since
$\widetilde Z(\{p\}\zpor G)\zco T_p M$ the formula
$$Z'(p)=\zil_G \widetilde Z(g,p)$$
defines a $G$-invariant vector field $Z'$ on $M$. Moreover if $Z=\zr U$
where $U$ is a $G$-invariant vector field, then $Z'=\zq_\zr U$ where
$\zq_\zr$ is the $G$-invariant function constructed from $\zr$ in the usual way,
that is $$\zq_\zr(p)=\zil_G \zr(g\zpu p)\, .$$

In order to prove Proposition \ref{proAB}, we start considering, for $X$ and $A$,
a function $\zf\colon M\zfl\mathbb R$ like in
\cite[Proposition 5.5, pag.\ 329]{TV}, and the vector field $\hat X$ defined by $\hat X=\zf X$ on $A$
and $\hat X=0$ on M-$A$.

Now observe that $\hat X'=\zq_\zf X$ on $A$ because on this open set
$X$ is $G$-invariant and $\hat X=\zf X$. Thus $\zq_\zf$ is the required
function  (up to the name) since $\zq_\zf $ and $\hat X'$ vanish on
$M-A$; so {\it Proposition \ref{proAB} is proved}.

%%%%%%%%%%%%%%%%%%%%%%%%%%%%%%%%%%
% Bibliography
%%%%%%%%%%%%%%%%%%%%%%%%%%%%%%%%%%

\end{document}